\documentclass[a4paper,11pt,reqno]{amsart}
\usepackage{amssymb,amsmath,hyperref,comment}

\usepackage{graphicx}

\title[A generalization of Bressoud's beautiful bijection]{A generalization of Bressoud's beautiful bijection}

\author{Katya Borodinova}
\address{Department of Mathematics and Computer Science, Saint Petersburg State University, Saint Petersburg,  Russia, 199178}
\email{katyaborodinova@gmail.com}

\renewcommand\theta{\vartheta}

\newtheorem{theorem}{Theorem}
\newtheorem{lemma}[theorem]{Lemma}

\theoremstyle{definition}
\newtheorem{definition}[theorem]{Definition}

\numberwithin{theorem}{section} 
\numberwithin{equation}{section}

\makeatletter
\@namedef{subjclassname@2020}{\textup{2020} Mathematics Subject Classification}
\makeatother

\begin{document}

\date{12 August 2026}

\subjclass[2020]{Primary 11P81, 05A17}

\keywords{integer partitions, Bressoud's beautiful bijection}

\begin{abstract}
In the storied history of the Theory of Partitions, it has long been a quest to find a purely combinatoric proof of the Rogers--Ramanujan identities.   In 1981, Garsia and Milne found a bijective proof using their Garsia--Milne involution principle.  However, their solution is algorithmic and long.  In September 1976, Andrews and Askey presented an open problem related to the combinatoric interpretation of the Rogers--Ramanujan identities at the NATO Advanced Study Institute, Higher Combinatorics, in Berlin.  Askey and Andrews' open problem arose by comparing the partition-theoretic interpretations of one of the Rogers--Ramanujan identities with a related $q$-hypergeometric identity of Rogers (1894).  Ideally, finding the conjectured bijection between the two families of partitions would serve as a more tractable stepping-stone in the quest.  Word spread, and the conjectured bijective proof was quickly found by the then PhD student David Bressoud (1976).  Although Andrews later published Bressoud's beautiful bijection twice in (1986) and (2004),  no one was able to find a generalization of Bressoud's result or even find a low-level analog.  Here we present multiple generalizations of the original problem of Andrews and Askey as well as Bressoud's beautiful bijection.  

\end{abstract}

\maketitle


\section{Introduction and State of Results}

We recall the notion of a partition.  A partition of a positive integer $n$ is a sequence of integers $\lambda=(\lambda_1, \lambda_2, \dots, \lambda_s)$ such that
\begin{equation*}
\lambda_1+\lambda_2+\dots + \lambda_s=n,
\end{equation*}
and 
\begin{equation*}
\lambda_1\ge \lambda_2 \ge \dots \ge \lambda_s\ge 1.
\end{equation*}
For example, $\lambda=(4,4,2,1,1)$ is a partition of the number 12. 

Let us consider the function $p : \mathbb{N}_0 \to \mathbb{N}$, where $p(n)$ is the number of distinct partitions of $n$,where we assume that $p(0)=1$. For example, all partitions of the number $4$ are: 
\begin{equation*}
(4),\ (3,1),\ (2,2),\ (2,1,1),\ (1,1,1,1).
\end{equation*}
\noindent Therefore, $p(4)=5$.

After defining the function $p$, we can work with its properties: it is not difficult to see that it is strictly monotone increasing, and we can find its asymptotic behaviour by estimating it in various ways.

We will consider expressions of the form $p(n | <\text{condition}>)$, meaning by this the number of partitions of $n$ satisfying a certain condition. For example, 
$$p(4|\text{even number of parts})=3,$$
 $$p(4|\text{all parts are odd})=2,$$
  $$p(4|\text{all parts are distinct})=2.$$

This brings us to arguably the most famous results in $q$-series:  the Rogers--Ramanujan identities.  These two identities provide a link between basic hypergeometric series and integer partitions.  The two identities read
\begin{align}
1+\sum_{n=1}\frac{q^{n^2}}{(1-q)(1-q^2)\cdots(1-q^n)}
&=\prod_{n=0}^{\infty}\frac{1}{(1-q^{5n+1})(1-q^{5n+4})},\label{equation:RR1}\\
1+\sum_{n=1}\frac{q^{n^2+n}}{(1-q)(1-q^2)\cdots(1-q^n)}
&=\prod_{n=0}^{\infty}\frac{1}{(1-q^{5n+2})(1-q^{5n+3})}\label{equation:RR2}.
\end{align}
The partition-theoretic interpretation for the first Rogers--Ramanujan identity (\ref{equation:RR1}) reads

\smallskip
{\em The number of partitions of $n$ such that the adjacent parts differ by at least $2$ is equal to the number of partitions of $n$ such that each part is congruent to either $1$ or $4$ modulo $5$,}

\smallskip
\noindent and the partition-theoretic interpretation for the second identity (\ref{equation:RR2}) reads

\smallskip
{\em The number of partitions of $n$ such that the adjacent parts differ by at least $2$ and such that the smallest part is at least $2$ is equal to the number of partitions of $n$ such that each part is congruent to either $2$ or $3$ modulo $5$.}

\smallskip
In the storied history of the Theory of Partitions, it has long been a quest to find a purely combinatoric proof of the Rogers--Ramanujan identities; in other words, on wants to give a combinatoric proof to the partition-theoretic interpretations of (\ref{equation:RR1}) and (\ref{equation:RR2}).   In 1981, Garsia and Milne found a bijective proof using the Garsia--Milne involution principle.  However, their solution is algorithmic and long.  One still wonders if a more insightful bijection is possible. 

\smallskip
In 1976, Andrews and Askey presented an open problem related to the combinatoric interpretation of the Rogers--Ramanujan identities at the NATO Advanced Study Institute, Higher Combinatorics, in Berlin.   Andrews and Askey's open problem arose by comparing the partition-theoretic interpretations of one of the Rogers--Ramanujan identities with a related $q$-hypergeometric identity of Rogers (1894).  Ideally, finding the conjectured bijection between the two families of partitions would serve as a more tractable stepping-stone in the quest.  

\smallskip
To state Andrews and Askey's open problem we recall two identities of Rogers (1894) and their respective partition-theoretic interpretations.   The two identities of Rogers read:
\begin{align}
\left  \{  \prod_{m=1}^{\infty} (1+q^{2m})\right \}&
\left \{1+\sum_{n=1}^{\infty}\frac{q^{n^2}}{(1-q^4) (1-q^{8})\cdots (1-q^{4n})} \right \}\notag \\
&=\prod_{n=0}^{\infty}\frac{1}{(1-q^{5n+1})(1-q^{5n+4})},\label{equation:LJR1}
\end{align}
and
\begin{align}
\left  \{  \prod_{m=1}^{\infty} (1+q^{2m})\right \}&
\left \{1+\sum_{n=1}^{\infty}\frac{q^{n^2+2n}}{(1-q^4) (1-q^{8})\cdots (1-q^{4n})}\right \}\notag\\
&=\prod_{n=0}^{\infty}\frac{1}{(1-q^{5n+2})(1-q^{5n+3})}.\label{equation:LJR2}
\end{align}
The partition-theoretic interpretations of (\ref{equation:LJR1}) and (\ref{equation:LJR2}) then read respectively

\begin{theorem}\cite[Theorem 6]{AndAsk}  The number of partitions of $n$ with distinct parts and with each even part larger than twice the number of odd parts equals the number of partitions of $n$ into parts congruent to $1$ or $4$ modulo $5$.
\end{theorem}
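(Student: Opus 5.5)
The plan is to derive the statement from Rogers' identity (\ref{equation:LJR1}) by reading its left-hand side as a generating function. The right-hand side of (\ref{equation:LJR1}) is already the generating function for partitions into parts congruent to $1$ or $4$ modulo $5$. So it suffices to show that the left-hand side,
\[
\prod_{m=1}^{\infty}(1+q^{2m})\cdot\sum_{n\ge 0}\frac{q^{n^2}}{(1-q^4)(1-q^8)\cdots(1-q^{4n})},
\]
is the generating function for partitions into distinct parts in which every even part exceeds twice the number of odd parts. I will prove this directly, by splitting such a partition into its odd parts and its even parts according to $n$, the number of odd parts.

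First, fix $n$ and count the odd parts. A set of $n$ distinct odd parts $o_1>o_2>\dots>o_n\ge 1$ can be written as $o_i = 2(n-i)+1 + 2a_i$ with $a_1\ge a_2\ge\dots\ge a_n\ge 0$. The minimal configuration $1,3,\dots,2n-1$ contributes $q^{n^2}$. The excesses $2a_i$ form a partition into at most $n$ parts, each doubled, which has generating function $1/\bigl((1-q^2)(1-q^4)\cdots(1-q^{2n})\bigr)$.

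Second, count the even parts. These must be distinct and each must exceed $2n$, so they are distinct elements of $\{2n+2, 2n+4,\dots\}$. Their generating function is $\prod_{m>n}(1+q^{2m})$. Multiplying the two pieces, the generating function for a fixed $n$ is
\[
\frac{q^{n^2}\prod_{m>n}(1+q^{2m})}{(1-q^2)\cdots(1-q^{2n})}
=\frac{q^{n^2}\prod_{m\ge1}(1+q^{2m})}{\prod_{j=1}^{n}(1-q^{2j})(1+q^{2j})}
=\frac{q^{n^2}\prod_{m\ge1}(1+q^{2m})}{(1-q^4)(1-q^8)\cdots(1-q^{4n})}.
\]
Summing over $n\ge 0$ gives exactly the left-hand side of (\ref{equation:LJR1}). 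The theorem then follows by comparing coefficients of $q^N$ on both sides.

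The only nontrivial input is Rogers' identity (\ref{equation:LJR1}) itself, which I take as given since it is recalled above. The step needing the most care is the telescoping $(1-q^{2j})(1+q^{2j})=1-q^{4j}$, which converts the partial product of the factors $(1+q^{2m})$ into the full infinite product. One must also check the boundary convention: the condition that even parts exceed $2n$, rather than being at least $2n$, is precisely what makes the product start at $m=n+1$.

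This argument is analytic through Rogers' identity rather than bijective. A bijective proof, in the spirit of Bressoud's, would instead map these partitions to Rogers--Ramanujan difference-two partitions and then invoke (\ref{equation:RR1}). I would pursue that second route only if a combinatorial proof is required.
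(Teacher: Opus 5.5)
Your proof is correct and follows the route the paper implicitly takes. The paper states this result (citing Andrews--Askey) as the partition-theoretic reading of Rogers' identity (\ref{equation:LJR1}), and your decomposition by the number $n$ of odd parts supplies exactly the details the paper leaves unstated: the odd parts contribute $q^{n^2}/\bigl((1-q^2)\cdots(1-q^{2n})\bigr)$, the even parts exceeding $2n$ contribute $\prod_{m>n}(1+q^{2m})$, and $(1-q^{2j})(1+q^{2j})=1-q^{4j}$ completes the product; the bijective alternative you mention is what the paper's Theorem \ref{theorem:Problem1Gen} with $d=2$, $\pi(0)=1$, $\pi(1)=0$ gives when combined with (\ref{equation:RR1}).
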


\begin{theorem}\cite[Theorem 7]{AndAsk} The number of partitions of $n$ into distinct parts each larger than $1$ in which each even part is larger than twice the number of odd parts equals the number of partitions of $n$ into parts congruent to $2$ or $3$ modulo $5$.
\end{theorem}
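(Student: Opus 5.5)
The plan is to deduce the statement from Rogers' identity (\ref{equation:LJR2}) by reading its left-hand side as a generating function. Since the right-hand side of (\ref{equation:LJR2}) is already the generating function for partitions into parts congruent to $2$ or $3$ modulo $5$, it is enough to show
\begin{equation*}
\left\{\prod_{m=1}^{\infty}(1+q^{2m})\right\}\left\{1+\sum_{n=1}^{\infty}\frac{q^{n^2+2n}}{(1-q^4)\cdots(1-q^{4n})}\right\}=\sum_{\lambda} q^{|\lambda|},
\end{equation*}
where $\lambda$ runs over partitions into distinct parts, each larger than $1$, with every even part larger than twice the number of odd parts. This is the same scheme that gives the previous theorem from (\ref{equation:LJR1}); only the exponent changes.

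\textbf{Step 1 (the odd parts).} Fix the number $n$ of odd parts and write them as $o_1>o_2>\dots>o_n\ge 3$, since parts equal to $1$ are excluded. Put $o_i=2n+1+2\mu_i$ minus the staircase $2(i-1)$, where $\mu$ is a partition into at most $n$ parts.
\begin{itemize}
\item The minimal configuration $3,5,\dots,2n+1$ has weight $n^2+2n$.
\item Adding $2\mu$ shows that the odd parts alone are generated by $q^{n^2+2n}/\bigl((1-q^2)\cdots(1-q^{2n})\bigr)$.
\end{itemize}

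\textbf{Step 2 (the even parts).} The even parts are distinct and all exceed $2n$. A convenient split is to write their generating function as
\begin{equation*}
\prod_{m>n}(1+q^{2m})=\frac{\prod_{m\ge1}(1+q^{2m})}{\prod_{m=1}^{n}(1+q^{2m})}.
\end{equation*}
Multiplying by the odd-part generating function from Step 1, the factors $(1-q^{2m})(1+q^{2m})=1-q^{4m}$ combine. For each fixed $n$ this gives
\begin{equation*}
\prod_{m\ge1}(1+q^{2m})\,\frac{q^{n^2+2n}}{(1-q^4)\cdots(1-q^{4n})}.
\end{equation*}
Summing over $n\ge0$ recovers exactly the left side of (\ref{equation:LJR2}), which finishes the argument.

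\textbf{Main obstacle.} The main point of care is Step 1: the staircase for the smallest admissible distinct odd parts has to be $3,5,\dots,2n+1$ and not $1,3,\dots$. That choice is what shifts $q^{n^2}$ to $q^{n^2+2n}$. The even-part condition "$>2n$" then needs no modification, because $2$ is automatically excluded when $n\ge1$, and when $n=0$ the part $2$ is allowed since $2>1$.

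The division by $\prod_{m\le n}(1+q^{2m})$ is legitimate as a formal power series identity, because each factor has constant term $1$. If one prefers a bijective flavour, the same count can be realized by a Bressoud-type map, but the generating function argument above suffices.
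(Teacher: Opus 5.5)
Your proposal is correct and follows the paper's route. The paper does not prove this theorem itself; it cites Andrews--Askey and presents the statement as the partition-theoretic reading of Rogers' identity (\ref{equation:LJR2}), which you carry out explicitly: odd parts as the staircase $3,5,\dots,2n+1$ plus $2\mu$, even parts distinct and $>2n$, and the cancellation $(1-q^{2m})(1+q^{2m})=1-q^{4m}$, with (\ref{equation:LJR2}) taken as known, as in the paper.
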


Comparing the partition-theoretic interpretations of (\ref{equation:RR1}) and (\ref{equation:LJR1}) is what motivates the problem of Andrews and Askey \cite{AndAsk}:

\medskip
\noindent \underline{\bf Problem 1.}  {\em  Provide a bijection between the partitions of $n$ in which the parts differ by at least $2$ and the partitions of $n$ into distinct parts in which each even part is twice the number of odd parts.}  

\medskip
Although the following is not explicitly stated in the work of Andrews and Askey \cite{AndAsk}, one could just as easily compare the partition-theoretic interpretations of (\ref{equation:RR2}) and (\ref{equation:LJR2}) to motivate

\medskip
\noindent \underline{\bf Problem 2.} 
{\em Provide a combinatoric bijection between the partitions of $n$ such that the adjacent parts differ by at least $2$ and such that the smallest part is at least $2$ and the partitions of $n$ into distinct parts each larger than $1$ in which each even part is larger than twice the number of odd parts.}

\medskip
After Berlin, Andrews presented his results at Temple University, Philadelphia, and the conjectured bijective proof of Problem $1$ was quickly found by the then PhD student David Bressoud (1976).  Andrews later published Bressoud's beautiful bijection twice (1986),  (2004).  However, no one was able to find a generalization of Bressoud's result or even find a low-level analog.  

\smallskip
To state our results, we will need to introduce some more terminology:
\begin{definition}
    A partition will be called \textit{distinct} or \textit{1-distinct} if all its parts are distinct, i.e., they differ by at least $1$. A partition will be called \textit{super-distinct} or \textit{2-distinct} if any two parts differ by at least $2$.
\end{definition}

\begin{definition}
    A partition will be called \textit{$d$-distinct} if any two parts differ by at least $d$.
\end{definition}

\smallskip
Our approach is motivated by two exercises found in Andrews and Eriksson \cite[Exercises 37, 38]{AE}.

\smallskip
In this paper, we prove four bijective results.  For the first result Theorem \ref{theorem:BressoudAlt}, we establish an analogue of Problem 1 where the roles of odd parts and even parts interchanged.  This is not new and is asked for in Exercise 37 \cite{AE}.  Exercise 38 \cite{AE} asks as a mini-research project to find an analog of Problem 1 but for $3$-distinct partitions.  For our first new result Theorem \ref{theorem:Problem1Gen}, we give a bijective proof for an analog of Problem 1 but for $d$-distinct partitions.  For our second new result Theorem \ref{theorem:Problem2Gen}, we then give a bijective proof for an analog of Problem 2 but for $d$-distinct partitions.  For our third new result Theorem \ref{theorem:BridgeResult}, we provide a theorem that links the generalizations of Problems 1 and 2.

Our first result reads

\begin{theorem} \label{theorem:BressoudAlt} For any natural number $n$, the following holds:
\begin{align*}
&p(n|\text{2-distinct partitions})\\
&\qquad=p(n |\text{distinct partitions and }\\
&\qquad \qquad \qquad  \text{\textbf{any odd part is greater than twice the number of even parts}}).
\end{align*}
\end{theorem}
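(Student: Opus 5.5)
The plan is to prove Theorem~\ref{theorem:BressoudAlt} by the same route as Bressoud's bijection for Problem 1: strip a fixed ``staircase'' from each side so that both families are parametrized by ordinary partitions with bounded numbers of parts, and then match those parameters by a simple parity decomposition.

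\emph{Left side.} Let $\lambda=(\lambda_1,\dots,\lambda_k)$ be $2$-distinct with $k$ parts. Put
\[
\mu_i=\lambda_i-\bigl(2(k-i)+1\bigr), \qquad 1\le i\le k .
\]
Then $\mu_1\ge\dots\ge\mu_k\ge 0$, so $\mu$ is a partition with at most $k$ parts (zeros allowed). Since $\sum_{i=1}^k\bigl(2(k-i)+1\bigr)=k^2$, we get $|\lambda|=k^2+|\mu|$. Conversely, every pair $(k,\mu)$ with $\mu$ having at most $k$ parts arises from exactly one such $\lambda$.

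\emph{Right side.} Take a distinct partition whose odd parts are all $>2j$, where $j$ is the number of even parts, and let $m$ be the number of odd parts, so $k=j+m$. Write the even parts in decreasing order as
\[
e_i=2(j-i+1)+2c_i, \qquad 1\le i\le j,
\]
and the odd parts in decreasing order as
\[
o_i=2j+2(m-i)+1+2b_i, \qquad 1\le i\le m .
\]
Distinctness within each parity class is equivalent to $c$ and $b$ being nonincreasing with nonnegative entries. The condition that every odd part exceeds $2j$ is equivalent to $b_m\ge 0$. So $c$ is a partition with at most $j$ parts and $b$ is a partition with at most $m$ parts. Summing the staircases gives
\[
j(j+1)+2jm+m^2=k^2+j ,
\]
so the total is $k^2+j+2|b|+2|c|$.

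\emph{Reduced problem.} It now suffices to give a weight-preserving bijection between
\begin{itemize}
\item partitions $\mu$ with at most $k$ parts, and
\item triples $(j,b,c)$ with $0\le j\le k$, $b$ having at most $k-j$ parts, $c$ having at most $j$ parts, and weight $j+2|b|+2|c|$.
\end{itemize}
Regard $\mu$ as exactly $k$ entries, padded with zeros. Let $j$ be the number of odd entries of $\mu$. The odd entries are $2c_i+1$, which defines $c$ with exactly $j$ entries (possibly zero). The even entries are $2b_i$, which defines $b$ with $k-j$ entries. Then $|\mu|=j+2|c|+2|b|$. The inverse takes the multiset union of $\{2c_i+1\}$ and $\{2b_i\}$ and sorts it, which gives back a partition with exactly $k$ entries. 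The two maps are clearly inverse to each other. Composing the three bijections proves Theorem~\ref{theorem:BressoudAlt}; each step is explicit, so the composite is a genuine combinatorial bijection.

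\emph{Main obstacle.} The step needing most care is the right-side staircase. One must check three things:
\begin{itemize}
\item the conditions ``distinct'' and ``odd parts $>2j$'' translate exactly into $b_m\ge 0$ and $c_j\ge 0$;
\item the number of even parts $j$ is recovered from the triple, so that the inverse lands in the right family;
\item the degenerate cases $j=0$ and $m=0$ behave correctly.
\end{itemize}
I would also verify the construction by hand for small $n$, say $n\le 8$, comparing both lists explicitly as a consistency check.
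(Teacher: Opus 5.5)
Your proposal is correct and is essentially the paper's bijection. Subtracting the staircase $1,3,\dots,2k-1$, splitting the entries of $\mu$ by parity, and rebuilding reproduces the paper's Steps 1--4 exactly: an odd residual $2c_i+1$ plus the lower staircase term $2(j-i)+1$ gives your even part $2(j-i+1)+2c_i$, and an even residual $2b_i$ plus the upper staircase gives your odd part $o_i$. The only difference is packaging: you factor the map through the parameters $(k,j,b,c)$ and the weight identity $k^2+j+2|b|+2|c|$, which makes invertibility and the equivalence with $b_m\ge 0$ (odd parts $>2j$) transparent, whereas the paper argues by shifting and re-sorting rows of Young diagrams.
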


Thus, the new theorem describes the number of 2-distinct partitions of $n$ in a different way: in terms of the number of 1-distinct partitions, and the condition \textbf{``every even part is greater than twice the number of odd parts"} has been replaced by \textbf{``every odd part is greater than twice the number of even parts"}.

Our first new result is a generalization of Problem 1, and it reads
\begin{theorem}\label{theorem:Problem1Gen} Let $n,\ d$ be natural numbers. Consider the residues modulo $d$ ($0,\ 1,\ \dots\ ,\ d-1$) and any permutation of them: $\pi(0),\ \pi(1),\ \dots\ ,\ \pi(d-1)$, i.e., these same numbers in some other order, each appearing once. Then
\begin{align*}
p(n|\text{$d$-distinct})=p(n|\textit{1-distinct and conditions $c_1,\dots,c_{d-1}$ hold}),
\end{align*}
where the conditions $c_{i}$ are
$$c_1:\ \text{any part $\equiv \pi(1)$ (mod $d$) is greater than}$$
$$\textit{$d * \left ( \text{the total number of parts }\equiv \pi (0) \pmod{d} \right ),$}$$

$$c_2:\ \text{any part $\equiv \pi(2)$ (mod $d$) is greater than}$$
$$\textit{  $d * \left ( \text{the total number of parts }\equiv \pi (0) \text{ or } \pi(1) \pmod{d} \right ),$}$$

$$c_3:\ \text{any part $\equiv \pi(3)$ (mod $d$) is greater than }$$
$$\text{$d * \left ( \text{the total number of parts }\equiv \pi (0),\ \pi (1)\text{ or } \pi(2) \pmod{d} \right ),$}$$

$$\vdots$$

$$c_{d-1}:\ \text{any part $\equiv \pi(d-1)$ (mod $d$) is greater than }$$
$$\text{$d * \left ( \text{the total number of parts }\equiv \pi (0),\ \pi (1),\  \dots\ \text{ or } \pi(d-2) \pmod{d} \right ).$}$$

\end{theorem}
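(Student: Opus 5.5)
The plan is to build an explicit bijection as a composition of ``staircase'' shifts. Each shift is clearly invertible. The idea is that both sides reduce to the same unrestricted object: a partition into $k$ positive parts, sorted by residue class modulo $d$. Write $r_j=\pi(j)$ and, for a partition, let $a_j$ be the number of parts $\equiv r_j \pmod d$, with $A_{j}=a_0+\dots+a_{j}$ and $A_{-1}=0$.

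\emph{Step 1 (the $d$-distinct side).} Let $\lambda_1>\lambda_2>\dots>\lambda_k$ be $d$-distinct. Set $\mu_i=\lambda_i-d(k-i)$. Because $\lambda_i-\lambda_{i+1}\ge d$ and $\lambda_k\ge 1$, the sequence $\mu$ is an arbitrary partition into exactly $k$ positive parts, and $|\lambda|=|\mu|+d\binom{k}{2}$. This is a bijection between $d$-distinct partitions with $k$ parts and all partitions with $k$ parts. Moreover $\mu_i\equiv\lambda_i \pmod d$ for every $i$. Now split $\mu$ into its residue classes: class $j$ is a multiset of $a_j$ positive parts, all $\equiv r_j$, with $\sum_j a_j=k$.

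\emph{Step 2 (making each class distinct).} In class $j$, list the parts as $\nu_1\ge\dots\ge\nu_{a_j}$ and replace $\nu_t$ by $\nu_t+d(a_j-t)$. This gives distinct parts, still $\equiv r_j$, and adds $d\binom{a_j}{2}$ to the weight. The step is invertible in the same way as Step 1.

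\emph{Step 3 (imposing the conditions $c_j$).} Add $dA_{j-1}$ to every part of class $j$. The parts of class $j$ were at least $1$, and at least $d$ when $r_j=0$. After the shift they are strictly greater than $d(a_0+\dots+a_{j-1})$, which is exactly condition $c_j$. Residues are preserved, so the parts within one class remain distinct, and parts from different classes are automatically distinct. This step adds $d\sum_j a_jA_{j-1}=d\,e_2(a_0,\dots,a_{d-1})$ to the weight. Conversely, any partition satisfying $1$-distinctness and $c_1,\dots,c_{d-1}$ has class-$j$ parts exceeding $dA_{j-1}$. Subtracting $dA_{j-1}$ therefore leaves positive parts with the correct residue, and again parts $\ge d$ in the class $r_j=0$. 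So Step 3 inverts.

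\emph{Step 4 (weight bookkeeping).} The weights match because of the identity
\[
\binom{k}{2}=\sum_j\binom{a_j}{2}+\sum_{i<j}a_ia_j ,\qquad k=\sum_j a_j,
\]
and the $a_j$ are preserved at every step. The composite map therefore preserves $n$, and it is a bijection from $d$-distinct partitions of $n$ onto $1$-distinct partitions of $n$ satisfying $c_1,\dots,c_{d-1}$.

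The main point needing care is the well-definedness of the inverse. When we pass from a partition satisfying the $c_j$ back to the multiset $\mu$, we must check that all parts stay positive, which is where the condition ``$>$'' rather than ``$\ge$'' is used. We must also check that the residue-$0$ class never produces a part equal to $0$. Finally, re-sorting the union of the classes gives a partition whose $d$-staircase lift is $d$-distinct; this last fact is just Step 1 read backwards. For $d=2$ with $\pi=(1,0)$ this recovers Bressoud's bijection, and with $\pi=(0,1)$ it gives Theorem \ref{theorem:BressoudAlt}.
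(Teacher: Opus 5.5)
Your proof is correct and gives essentially the paper's bijection. Your $\mu_i$ is the paper's $b_i+1$: you subtract the staircase $0,d,2d,\dots$ instead of $1,d+1,2d+1,\dots$, so residues stay fixed instead of shifting by $-1$. Your Steps 2--3 together add back exactly $0,d,\dots,(k-1)d$ to the residue classes stacked with $\pi(0)$ at the bottom, which is the paper's Step 3; the only differences are presentational, namely that you check the weight via $\binom{k}{2}=\sum_j\binom{a_j}{2}+\sum_{i<j}a_ia_j$ rather than noting the same staircase is removed and restored, and you spell out the inverse class by class.
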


Thus, the second theorem generalizes Bressoud's identity further: for any chosen ordering $\pi(0),\pi(1),\dots,\pi(d-1)$ of the residue classes modulo $d$, the number of $d$-distinct partitions of $n$ equals the number of 1-distinct partitions of $n$ such that for each $s=1,\dots,d-1$, every part congruent to $\pi(s)$ modulo $d$ is greater than $d$ times the total number of parts whose residues belong to $\{\pi(0),\pi(1),\dots,\pi(s-1)\}$. In other words, parts of the ``later" residues must be sufficiently large relative to the count of parts of the ``earlier" residues.

\smallskip
Our second new result is a generalization of Problem 2.  It reads
\begin{theorem}\label{theorem:Problem2Gen} Let $n,\ d$ be natural numbers. Consider the residues modulo $d$ ($0,\ 1,\ \dots\ ,\ d-1$) and any permutation of them: $\pi(0),\ \pi(1),\ \dots\ ,\ \pi(d-1)$. Then
\begin{align*}
p(n|\text{$d$-distinct, smallest part $\geq d$})=p(n|\textit{1-distinct, smallest part $\geq d$ and conditions $c^d_1,\dots,c^d_{d-1}$ hold}),
\end{align*}
where the conditions $c^d_{i}$ are
$$c^d_1:\ \text{any part $\equiv \pi(1)$ (mod $d$) is greater than or equal to}$$
$$\textit{$d * \left (1+ \text{the total number of parts }\equiv \pi (0) \pmod{d} \right ),$}$$

$$c^d_2:\ \text{any part $\equiv \pi(2)$ (mod $d$) is greater than or equal to}$$
$$\textit{  $d * \left ( 1+\text{the total number of parts }\equiv \pi (0) \text{ or } \pi(1) \pmod{d} \right ),$}$$

$$c^d_3:\ \text{any part $\equiv \pi(3)$ (mod $d$) is greater than or equal to}$$
$$\text{$d * \left (1+ \text{the total number of parts }\equiv \pi (0),\ \pi (1)\text{ or } \pi(2) \pmod{d} \right ),$}$$

$$\vdots$$

$$c^d_{d-1}:\ \text{any part $\equiv \pi(d-1)$ (mod $d$) is greater than or equal to}$$
$$\text{$d * \left (1+ \text{the total number of parts }\equiv \pi (0),\ \pi (1),\  \dots\ \text{ or } \pi(d-2) \pmod{d} \right ).$}$$

\end{theorem}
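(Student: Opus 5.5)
The plan is to prove Theorem~\ref{theorem:Problem2Gen} by restricting the bijection built for Theorem~\ref{theorem:Problem1Gen}, rather than by constructing a new one. The restriction will be shown to map exactly the partitions with smallest part $\geq d$ on one side onto those on the other. The bijection for Theorem~\ref{theorem:Problem1Gen} is taken to be an iterated Bressoud-type insertion, which I first fix concretely.

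Start from a $1$-distinct partition satisfying $c_1,\dots,c_{d-1}$. The parts $\equiv \pi(0) \pmod d$ are pairwise distinct and congruent mod $d$, hence already form a $d$-distinct partition $\mu^{(0)}$. Then process the residue classes $\pi(1),\pi(2),\dots,\pi(d-1)$ in order. Within each class, process its parts in increasing order, inserting them one at a time into the current $d$-distinct partition $\mu$. Suppose $\mu$ has $N$ parts and the new part is $x$. Write $N_s$ for the number of parts in classes $\pi(0),\dots,\pi(s-1)$.
\begin{itemize}
\item The condition $x>dN_s$ must be strengthened to $x>dN$ for the parts inserted earlier in the same class. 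This uses distinctness within the class: the $j$-th smallest part of class $\pi(s)$ exceeds $dN_s+d(j-1)$.
\item Choose $t\in\{0,\dots,N\}$ minimal such that placing the new part $x-dt$ and adding $d$ to each of the $t$ smallest parts of $\mu$ yields a $d$-distinct partition.
\item This preserves the total sum and increases the number of parts by one.
\item The congruence $x-dt\equiv x \pmod d$, together with minimality of $t$, is what will allow the last insertion to be recognized and undone.
\end{itemize}

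With this description fixed, the first key step is to verify the lower bound on the inserted part. If $x\geq d(N+1)$, then $x-dt\geq d(N+1-t)\geq d$ for every admissible $t\leq N$. The parts of $\mu$ that are shifted by $d$ only grow, and the untouched parts were already $\geq d$ by induction. So every intermediate partition has smallest part $\geq d$.

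The hypothesis $x\geq d(N+1)$ is exactly what the conditions $c^d_s$ supply. The within-class step above upgrades $x\geq d(1+N_s)$ to $x\geq d(1+N)$. For the base class $\pi(0)$, the hypothesis ``smallest part $\geq d$'' is used directly. Hence the forward map sends the right-hand set of Theorem~\ref{theorem:Problem2Gen} into $d$-distinct partitions with smallest part $\geq d$.

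The second key step is the converse: the inverse map must send a $d$-distinct partition with smallest part $\geq d$ back to a $1$-distinct partition with smallest part $\geq d$ satisfying the $c^d_s$. The inverse peels off parts in reverse order. At each stage it extracts a part $z$ together with its shift $t$, and restores $x=z+dt$. I would show that the extracted $x$ satisfies $x\geq d(N+1)$ whenever the current partition has all parts $\geq d$. This should follow because the part removed is the one sitting just above the $t$ shifted parts, each of which was at least $2d$ after shifting. Running the inequality backward through the staircase then gives $z\geq d(N+1-t)$.

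The main obstacle I expect is this inverse direction. One must make sure the strict inequality $x>dN$ of Theorem~\ref{theorem:Problem1Gen} upgrades to $x\geq d(N+1)$ exactly when the output's smallest part is $\geq d$, and not merely when it is $\geq 1$. The delicate case is $t=N$, where the inserted part becomes the smallest part of the result. There the equivalence reduces to $x-dN\geq d \iff x\geq d(N+1)$, which I would check first. For $t<N$ both sides hold automatically. If that case analysis goes through, the two restricted sets are in bijection and the theorem follows.
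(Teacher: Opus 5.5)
Restricting the Theorem~\ref{theorem:Problem1Gen} bijection to smallest part $\geq d$ on both sides is a sound plan. The paper's staircase bijection restricts in exactly this way. The gap is that the insertion map you substitute for that bijection is not a bijection. Your one sentence on invertibility (``the congruence \dots\ together with minimality of $t$'') does not establish it, and in fact it fails.

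Take $d=2$, $\pi(0)=1$, $\pi(1)=0$ and $n=11$. Both $(6,5)$ and $(7,4)$ satisfy the right-hand conditions of Theorem~\ref{theorem:Problem2Gen}: smallest part $\geq 2$, and even parts $\geq 2(1+\#\text{odd parts})=4$.
\begin{itemize}
\item For $(6,5)$ you insert $x=6$ into $\mu=(5)$. Here $t=0$ gives $\{6,5\}$, which is not $2$-distinct, so $t=1$ and the result is $(7,4)$.
\item For $(7,4)$ you insert $x=4$ into $(7)$, and $t=0$ already gives the $2$-distinct $(7,4)$.
\end{itemize}
So both partitions go to $(7,4)$, and $(9,2)$ is never reached. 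Since $N=1$, this does not depend on whether you shift the smallest or the largest parts. The culprit is the rule ``minimal $t$ such that the result is $d$-distinct'', which lets $x-dt$ land below unshifted parts.

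Shifting the $t$ \emph{smallest} parts adds a second failure. Take $d=2$, $\pi(0)=0$, $\pi(1)=1$ and the admissible partition $(7,6,2)$, so $\mu=(6,2)$ and $x=7$. Then $t=0,1,2$ give $\{7,6,2\}$, $\{6,5,4\}$, $\{8,4,3\}$, none of which is $2$-distinct. The map is not even defined there.

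To repair the insertion, add $d$ to the $t$ \emph{largest} parts and force $x-dt$ to sit directly beneath them, at least $d$ above every unshifted part. Equivalently, let $\beta_i=\mu_{(i)}-(i-1)d-1$ for the parts in increasing order, insert $\gamma=x-dN-1\geq 0$ into the sorted $\beta$'s, and restore the staircase. Then the $l$-th smallest part $x$ of class $\pi(s)$ contributes the value $x-d(N_s+l-1)-1$. Your iterated insertion becomes exactly the paper's staircase bijection (Steps 1--4), and invertibility comes from that picture.

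Your restriction argument then works, except that the inverse bound as you state it is backwards. The bound $z\geq d(N+1-t)$ comes from the $N-t$ \emph{unshifted} parts lying below $z$, which are $d$-distinct and $\geq d$. It does not come from the $t$ shifted parts, which lie above $z$ and only bound it from above.

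In staircase form the restriction takes one line: $a_m\geq d$ iff every $b_i\geq d-1$ iff the smallest part of class $\pi(s)$ is at least $dN_s+1+(d-1)=d(1+N_s)$. This is equivalent to the paper's Section~\ref{section:BridgeResult} construction with $k=d$, since subtracting $d,2d,\dots$ instead of $1,d+1,\dots$ only shifts every $b_i$ by $d-1$.
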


Our third new result forms a bridge between Theorems \ref{theorem:Problem1Gen} and \ref{theorem:Problem2Gen}.  It reads
\begin{theorem}\label{theorem:BridgeResult} 

Let $n,\ d,\ k\geq1$ be natural numbers. Consider the residues modulo $d$ ($0,\ 1,\ \dots\ ,\ d-1$) and any permutation of them: $\pi(0),\ \pi(1),\ \dots\ ,\ \pi(d-1)$. Then
\begin{align*}
p(n|\text{$d$-distinct, smallest part $\geq k$})=p(n|\textit{1-distinct, smallest part $\geq k$ and conditions $c^k_1,\dots,c^k_{d-1}$ hold}),
\end{align*}
where the conditions $c^k_{i}$ are
$$c^k_1:\ \text{any part $\equiv \pi(1)$ (mod $d$) is greater than or equal to}$$
$$\textit{$k+d * \left (\text{the total number of parts }\equiv \pi (0) \pmod{d} \right ),$}$$

$$c^k_2:\ \text{any part $\equiv \pi(2)$ (mod $d$) is greater than or equal to}$$
$$\textit{  $k+d * \left ( \text{the total number of parts }\equiv \pi (0) \text{ or } \pi(1) \pmod{d} \right ),$}$$

$$c^k_3:\ \text{any part $\equiv \pi(3)$ (mod $d$) is greater than  or equal to}$$
$$\text{$k+d * \left (\text{the total number of parts }\equiv \pi (0),\ \pi (1)\text{ or } \pi(2) \pmod{d} \right ),$}$$

$$\vdots$$

$$c^k_{d-1}:\ \text{any part $\equiv \pi(d-1)$ (mod $d$) is greater than or equal to}$$
$$\text{$k+d * \left (\text{the total number of parts }\equiv \pi (0),\ \pi (1),\  \dots\ \text{ or } \pi(d-2) \pmod{d} \right ).$}$$

\end{theorem}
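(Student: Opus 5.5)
The plan is to prove the theorem through an intermediate family rather than by a direct map. Fix $m\ge 0$. I would show that both sides, restricted to partitions with exactly $m$ parts, are in bijection with the set $\mathcal{P}_{k,m}$ of ordinary partitions (repetitions allowed) into exactly $m$ parts, each $\ge k$, of $n-d\binom{m}{2}$. Composing the two bijections then gives the theorem. The key observation is that adding or subtracting multiples of $d$ never changes a part's residue modulo $d$. So in the intermediate object every part carries a well-defined residue class, and the ordering $\pi$ only affects how we put parts back in order.

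\emph{Step 1 (left side).} Let $\lambda$ be $d$-distinct with smallest part $\ge k$, written increasingly as $\lambda_1<\lambda_2<\dots<\lambda_m$. Put $\mu_j=\lambda_j-d(j-1)$. Since $\lambda_{j+1}-\lambda_j\ge d$, the $\mu_j$ are nondecreasing, and $\mu_1=\lambda_1\ge k$. Also $\sum\mu_j=n-d\binom m2$. Conversely, listing any element of $\mathcal P_{k,m}$ nondecreasingly and adding $d(j-1)$ to the $j$-th entry recovers a $d$-distinct partition with smallest part $\ge k$. This is the classical staircase bijection. Note that $\mu_j\equiv\lambda_j \pmod d$.

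\emph{Step 2 (right side).} Let $\nu$ be $1$-distinct with smallest part $\ge k$ satisfying $c^k_1,\dots,c^k_{d-1}$. Let $N_s$ be the number of parts of $\nu$ congruent to one of $\pi(0),\dots,\pi(s-1)$, with $N_0=0$. List the parts of $\nu$ in the order: first the parts $\equiv\pi(0)$ increasingly, then the parts $\equiv\pi(1)$ increasingly, and so on. Call this sequence $\nu_1,\dots,\nu_m$, so the parts of class $\pi(s)$ occupy positions $N_s+1,\dots,N_{s+1}$. Set $\mu_j=\nu_j-d(j-1)$. Within one class, consecutive parts are distinct and congruent mod $d$, so they differ by at least $d$; hence the $\mu_j$ are nondecreasing inside each block. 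The first part of block $s$ is $\ge k+dN_s$ by $c^k_s$ (for $s=0$ we have $N_0=0$ and the hypothesis smallest part $\ge k$). Therefore every $\mu_j\ge k$, and the multiset $\{\mu_j\}$ lies in $\mathcal P_{k,m}$.

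For the inverse, take $\mu\in\mathcal P_{k,m}$. Sort its entries by the key (rank of residue under $\pi^{-1}$, then value), and add $d(j-1)$ to the $j$-th entry. Within a block the values strictly increase, by at least $d$ each step. Across blocks the residues differ, so all parts are distinct. The first element of block $s$ sits at position $N_s+1$ and has $\mu\ge k$, so it becomes $\ge k+dN_s$; later elements of the block are larger still. This is exactly $c^k_s$, and the smallest part is $\ge k$. Because residues are preserved by the shift, the blocks recovered are the same blocks we started from, so the two maps are mutually inverse.

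The main obstacle is the well-definedness in Step 2. In Step 1 the ordering is forced by size, but here the ordering of the parts is a choice, and I must check that the ordering chosen in the forward map coincides with the canonical sort used in the inverse. Both sort first by $\pi$-rank of residue and then by value, and residues are invariant under the shifts, so they agree. Ties among equal $\mu$-values cause no ambiguity, since equal entries are interchangeable in a multiset. Once this is checked, composing Step 1 with the inverse of Step 2 gives the desired bijection, which preserves $n$ and the number of parts. Taking $k=1$ recovers Theorem~\ref{theorem:Problem1Gen}, and taking $k=d$ recovers Theorem~\ref{theorem:Problem2Gen}.
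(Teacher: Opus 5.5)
Your proof is correct and is essentially the paper's bijection: subtracting $0,d,2d,\dots$ and keeping the intermediate parts $\ge k$ is the paper's subtraction of the staircase $k,k+d,k+2d,\dots$, followed by regrouping by the shifted residues $\pi(i)-k$, with the $k$ absorbed into your intermediate partition. Absorbing $k$ leaves residues literally unchanged, and you state the block order unambiguously (class $\pi(0)$ receives the smallest shifts). The paper's sketch in Section~\ref{section:BridgeResult} is confused on this point: it places $\pi(0)$ at the top, which contradicts the stated conditions $c^k_i$ and Section~\ref{section:Problem1Gen}.
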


In Section 2 we introduce Young Diagrams. In Section 3 we prove Theorem 1.5. In Section 4, we prove Theorem 1.6. In Section 5, we prove Theorem 1.8.

\section{An introduction to Young diagrams}\label{section:YoungIntro}

    A very useful tool when working with partitions are \textit{Young diagrams}, which are diagrams consisting of several rows, arranged in non-increasing order from top to bottom. The number of boxes in a row represents one part of the partition. For example, this is how the partition $\lambda=(5,4,1)$ of the number $10$ can be represented as a Young diagram:

    \begin{center}
        \includegraphics[width=4cm]{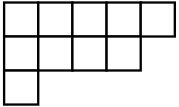}
    \end{center}

Young tableaux make some previously non-obvious statements evident, for example:

\begin{lemma} For any natural numbers $m$, $n$:
    $$p(n \mid \text{the number of parts does not exceed } m) = p(n \mid \text{each part does not exceed } m).$$
\end{lemma}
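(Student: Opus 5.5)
The plan is to prove the identity by conjugation of Young diagrams, as described in Section~\ref{section:YoungIntro}. For a partition $\lambda=(\lambda_1,\dots,\lambda_s)$ of $n$, draw its Young diagram: row $i$ has $\lambda_i$ boxes, and the rows are left-justified and weakly decreasing from top to bottom. Then reflect the diagram across its main diagonal, so that rows become columns. The result is again a Young diagram, that of the \emph{conjugate} partition $\lambda'$. Its $j$-th part is
\[
\lambda'_j=\#\{i : \lambda_i\ge j\},
\]
the number of boxes in column $j$ of the original diagram.

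The key steps, in order, are these.

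First, I check that $\lambda'$ is a partition of $n$. The number of boxes is unchanged by reflection, so the parts sum to $n$. Moreover $\lambda'_j\ge \lambda'_{j+1}$, since every row of length at least $j+1$ also has length at least $j$. The parts are positive for $1\le j\le \lambda_1$, so $\lambda'$ has exactly $\lambda_1$ parts.

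Second, I show that the map $\lambda\mapsto\lambda'$ is a bijection on partitions of $n$. The diagram of $(\lambda')'$ is obtained by reflecting the diagram of $\lambda$ twice, so $(\lambda')'=\lambda$. Hence conjugation is an involution.

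Third, I compare the two statistics. The largest part of $\lambda'$ is $\lambda'_1=\#\{i:\lambda_i\ge 1\}=s$, the number of parts of $\lambda$. So $\lambda$ has at most $m$ parts if and only if every part of $\lambda'$ is at most $m$. Restricting the involution to partitions with at most $m$ parts therefore gives a bijection onto partitions with all parts at most $m$. The inverse of this restriction is conjugation again, since $\lambda'$ has $\lambda_1$ parts. This gives the equality of the two counts.

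There is no real obstacle here. The only point needing care is the identification $\lambda'_1=s$ together with the fact that $\lambda'$ has exactly $\lambda_1$ parts; both follow directly from the formula $\lambda'_j=\#\{i:\lambda_i\ge j\}$.
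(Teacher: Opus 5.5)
Your proposal is correct and follows the same route as the paper: you reflect the Young diagram about its main diagonal, so the number of parts of $\lambda$ becomes the largest part of the conjugate. You add detail the paper leaves implicit, namely the explicit formula $\lambda'_j=\#\{i:\lambda_i\ge j\}$ and the involution property $(\lambda')'=\lambda$, which together make the bijectivity explicit.
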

\begin{proof}
    Consider a partition $\lambda$ of the number $n$ with at most $m$ parts. Construct the Young diagram corresponding to this partition. Its height is equal to the number of parts, it does not exceed $m$. Reflect this diagram about the main diagonal. It is easy to see that the result is again a Young diagram, and its top row (i.e., the longest one) does not exceed $m$. Therefore, the partition corresponding to this diagram consists of parts not exceeding $m$. Thus, we have constructed a bijection between partitions of $n$ with at most $m$ parts and partitions of the same number where all parts are at most $m$. This proves the required equality between the cardinalities of these sets.
\end{proof}

\section{Proof of Theorem \ref{theorem:BressoudAlt}}\label{section:BressoudAlt}

\textbf{Idea of proof}
    The proof consists of constructing a bijection that deforms the Young diagram of a given partition into 2-distinct parts into the Young diagram of a partition into distinct parts, where each even part is greater than twice the number of odd parts. One of the steps in constructing the bijection is placing the odd parts at the top and the even parts at the bottom.\\

We will construct a bijection between the sets of partitions of the number $n$ with the corresponding properties, thereby proving the equality.

Consider a partition of the number $n$ into super-distinct parts:
\[
a_1 > a_2 > \dots > a_m, \quad a_i - a_{i+1} \ge 2.
\]

\noindent \textbf{Step 1} Construct a Young diagram. In it, any two adjacent rows differ by at least two, and the bottom (i.e., the smallest) row is $1$ or greater. Then evaluate each of the rows in the diagram:
$$a_m\geq1$$
$$a_{m-1}\geq a_m+2\geq3$$
$$a_{m-2}\geq a_{m-1}+2\geq5$$
$$\vdots$$
$$a_2\geq a_3+2\geq 2m-3$$
$$a_1\geq a_2+2\geq 2m-1$$

Then subtract $1$ from $a_m$, $3$ from $a_{m-1}$, $5$ from $a_{m-2}$, $\dots$, $2m-3$ from $a_2$, $2m-1$ from $a_1$:

$$b_m=a_m-1$$
$$b_{m-1}=a_{m-1}-3$$
$$b_{m-2}=a_{m-2}-5$$
$$\vdots$$
$$b_2=a_2-(2m-3)$$
$$b_1=a_1-(2m-1)$$

First, note that the resulting numbers are non-negative due to the inequalities above. Second, $b_1 \geq b_2 \geq b_3 \geq \ldots \geq b_{m-1} \geq b_m\geq0$:

$$a_i\geq a_{i+1}+2$$
$$b_i=a_i-2(m-i)-1$$
$$b_{i+1}=a_{i+1}-2(m-(i+1))-1$$
$$b_i-b_{i+1}=(a_i-2(m-i)-1)-(a_{i+1}-2(m-(i+1))-1)=a_i-a_{i+1}-2\geq0$$

Thus, we can shift each row to the left by the corresponding number of boxes and obtain, to the left of the separator, a Young diagram with parts $1,\ 3,\ 5,\ \dots\ ,\ 2m-3,\ 2m-1$, and to the right, a Young diagram for the partition $(b_1,\dots,b_m)$ (more precisely, the last few parts may become zero after subtraction, but we will assume there are exactly $m$ parts, some of which may be zero):

\begin{center}
    \includegraphics[width=10cm]{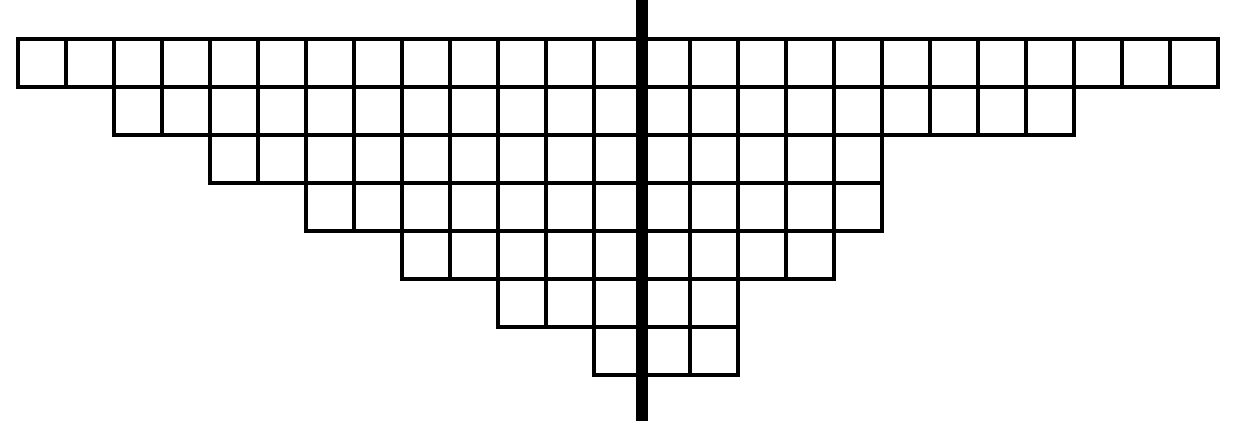}
\end{center}
\noindent \textbf{Step 2} Sort the parts $b_i$ in non-increasing order from top to bottom, dividing them into two groups by parity: even ones on top, odd ones on the bottom:

\begin{center}
    \includegraphics[width=10cm]{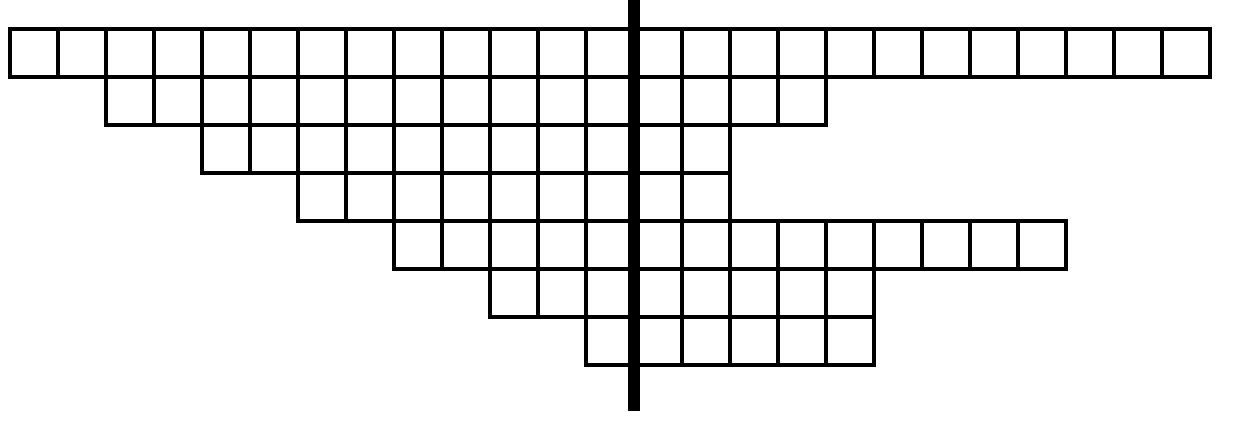}
\end{center}

\noindent \textbf{Step 3} Now shift each row of the diagram back to the right, i.e., add to the odd parts, in increasing order, the numbers $1,\ 3,\ 5,\ \dots\ ,\ 2k-1$, and then to the even parts, in increasing order, $2k+1,\ 2k+3,\ \dots\ , 2m-1$:

\begin{center}
    \includegraphics[width=9cm]{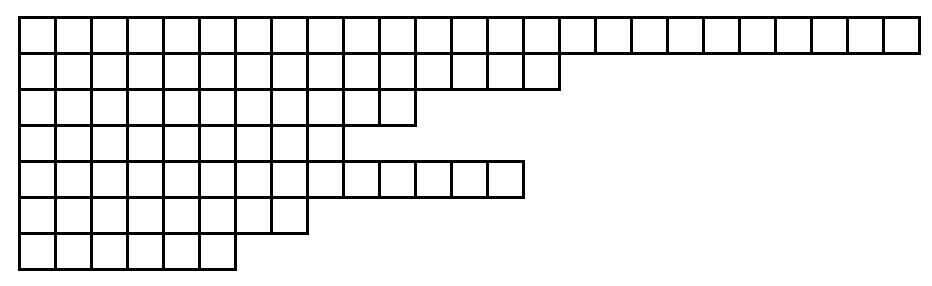}
\end{center}
\noindent \textbf{Step 4} 

Note that after Step 2 on the right side and after Step 3, we do not always obtain Young diagrams, i.e., the parts are not sorted. Therefore, as a final step, we sort the parts:

\begin{center}
    \includegraphics[width=9cm]{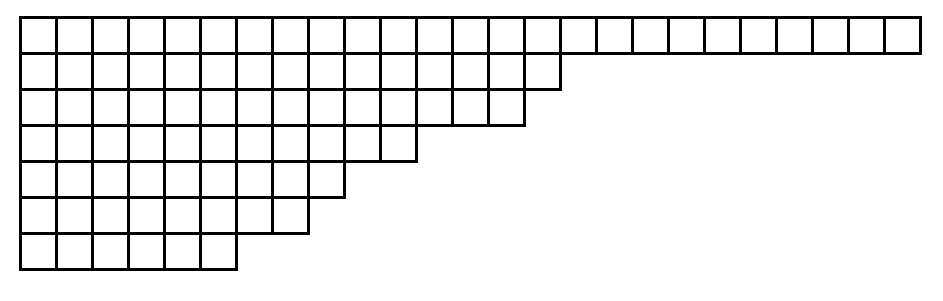}
\end{center}

Thus, in 4 steps we have transformed a super-distinct partition of the number $n$ into some partition of the number $n$. Now we will prove that the resulting partition will satisfy the required properties: it will consist of distinct parts, and any even part (if they exist) is greater than twice the number of odd parts.\\

First, let us understand that the result is always a 1-distinct partition: indeed, in Step 3 we add consecutive odd numbers to monotonically increasing odd parts and monotonically increasing even parts. Therefore, even parts become odd, and odd parts become even, and if two parts of the same parity were unequal, the difference between them increases, while if they were equal, they become unequal after adding distinct odd numbers.\\

It remains to prove the inequality for the odd parts. For this, assume that a distinct partition $\pi$ was obtained from some 2-distinct partition using the algorithm described above. We will construct the inverse algorithm and determine for which partitions it can be executed.\\

First, we reverse Step 4. After performing Step 3 of the initial algorithm, the top consists of a group of odd parts sorted in strictly decreasing order, and the bottom consists of a group of even parts sorted in strictly decreasing order. Therefore, to reverse Step 4, we need to rearrange the parts of the partition $\pi$, placing the odd parts at the top and the even parts at the bottom.

\begin{center}
    \includegraphics[width=10cm]{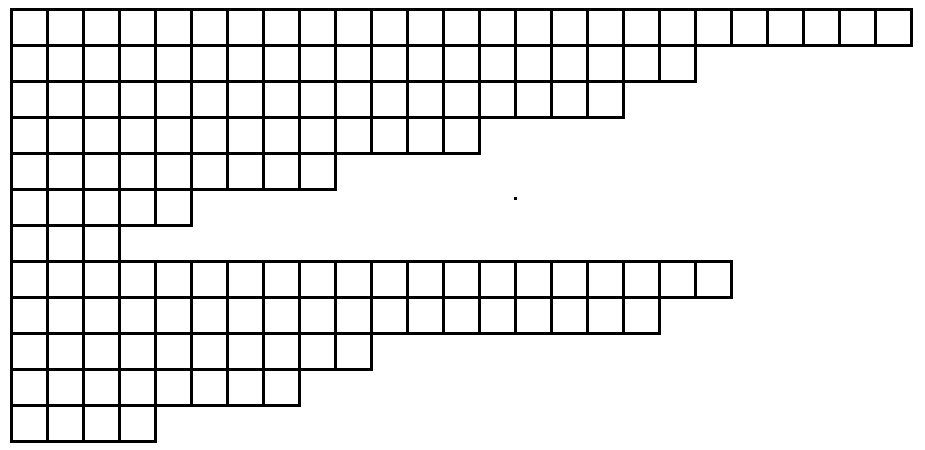}
\end{center}

Let $\pi_o$ denote the number of odd parts and $\pi_e$ denote the number of even parts. Now reverse Step 3: subtract $1$ from the smallest even part, $3$ from the second smallest even part, $\dots$, $2\pi_e - 1$ from the largest even part; then subtract $2\pi_e + 1$ from the smallest odd part, $2\pi_e + 3$ from the second smallest odd part, $\dots$, $2(\pi_e + \pi_o) - 1$ from the largest odd part.

Note that since $\pi$ is a 1-distinct partition, any two parts of the same parity differ by at least $2$, and they are sorted in non-increasing order. Therefore, if it is possible to subtract $1$ from the smallest even part (which can always be done), then the consecutive odd numbers can be subtracted from the remaining, larger even parts. And if it is possible to subtract $2\pi_e + 1$ from the smallest odd part, then the subsequent consecutive odd numbers can be subtracted from the remaining, larger odd parts. Thus, the only potential issue is the inability to subtract $2\pi_e + 1$ from the smallest odd part. This can be done exactly when this smallest odd part, and consequently all others, is at least $2\pi_e + 1$, meaning strictly greater than twice the number of even parts.

\begin{center}
    \includegraphics[width=10cm]{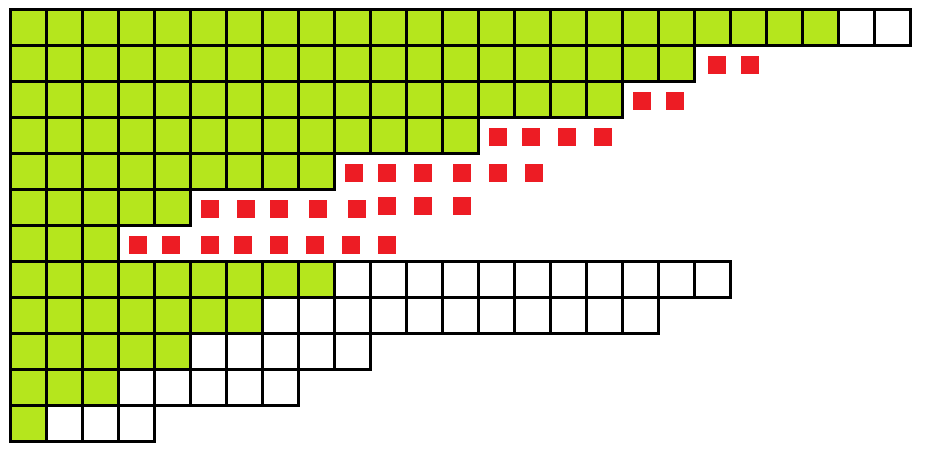}
\end{center}

Next, if the required inequality holds, we shift the parts to the left by the consecutive odd numbers $1,\ 3,\ \dots\ ,\ 2(\pi_e+\pi_o)-1$, leaving unsorted parts on the right, and then reverse Steps 2 and 1 by sorting the parts on the right and shifting them back. It remains to explain why the result after inversion is a 2-distinct partition. This follows from the fact that, when reversing Step 1 after reversing Step 2, we add strictly increasing odd parts to monotonically increasing right-hand parts.\\

Thus, we have constructed a direct algorithm that transforms a 2-distinct partition into some other partition, and then we reversed the algorithm, showing that, firstly, it can be reversed only for partitions with the required properties, and secondly, the reversed algorithm yields a 2-distinct partition. Therefore, this is a bijection between these two sets, which proves the equality:

$$p(n|\text{2-distinct partitions})=$$
$$=p(n|\textit{distinct partitions and any odd part is greater than twice the number of even parts}).$$

\section{Proof of Theorem \ref{theorem:Problem1Gen}}\label{section:Problem1Gen}

We will construct a bijection between the sets of partitions of the number $n$ with the corresponding properties, thereby proving the equality.

Consider a partition of the number $n$ into $d$-distinct parts:
\[
a_1 > a_2 > \dots > a_m, \quad a_i - a_{i+1} \geq d.
\]

\noindent \textbf{Step 1} Construct a Young diagram. In it, any two adjacent rows differ by at least $d$, and the bottom (i.e., the smallest) row is $1$ or greater. Then evaluate each of the rows in the diagram:
$$a_m\geq1$$
$$a_{m-1}\geq a_m+d\geq d+1$$
$$a_{m-2}\geq a_{m-1}+d\geq 2d+1$$
$$\vdots$$
$$a_2\geq a_3+d\geq (m-2)d+1$$
$$a_1\geq a_2+d\geq (m-1)d+1$$

Then subtract $1$ from $a_m$, $d+1$ from $a_{m-1}$, $2d+1$ from $a_{m-2}$, $\dots$, $(m-2)d+1$ from $a_2$, $(m-1)d+1$ from $a_1$:

$$b_m=a_m-1$$
$$b_{m-1}=a_{m-1}-(d+1)$$
$$b_{m-2}=a_{m-2}-(2d+1)$$
$$\vdots$$
$$b_2=a_2-((m-2)d+1)$$
$$b_1=a_1-((m-1)d+1)$$

First, note that the resulting numbers are non-negative due to the inequalities above. Second, $b_1 \geq b_2 \geq b_3 \geq \ldots \geq b_{m-1} \geq b_m\geq0$:

$$a_i\geq a_{i+1}+d$$
$$b_i=a_i-((m-i)d+1)$$
$$b_{i+1}=a_{i+1}-((m-(i+1))d+1)$$
$$b_i-b_{i+1}=(a_i-((m-i)d+1))-(a_{i+1}-((m-(i+1))d+1))=a_i-a_{i+1}-d\geq0$$

Thus, we can shift each row to the left by the corresponding number of boxes and obtain, to the left of the separator, a Young diagram with parts $1,\ d+1,\ 2d+1,\ \dots\ ,\ ((m-2)d+1),\ ((m-1)d+1)$, and to the right, a Young diagram for the partition $(b_1,\dots,b_m)$ (more precisely, the last few parts may become zero after subtraction, but we will assume there are exactly $m$ parts, some of which may be zero):

\begin{center}
    \includegraphics[width=10cm]{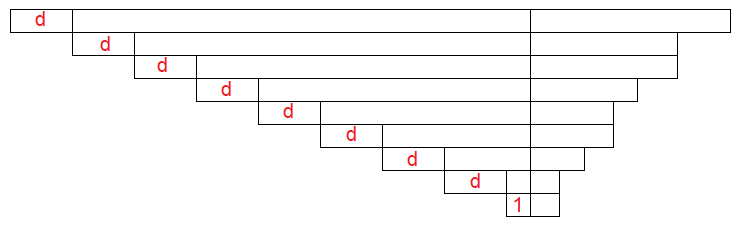}
\end{center}
\noindent \textbf{Step 2}

Sort the parts $b_i$, forming $d$ groups, each consisting of non-increasing parts, sorted from top to bottom. The parts are divided into these $d$ groups according to their residues modulo $d$: at the top, those congruent to $\pi(d-1)-1$ modulo $d$; next, those congruent to $\pi(d-2)-1$ modulo $d$; $\dots$; followed by those congruent to $\pi(1)-1$ modulo $d$; and finally, at the bottom, those congruent to $\pi(0)-1$ modulo $d$:

\begin{center}
    \includegraphics[width=10cm]{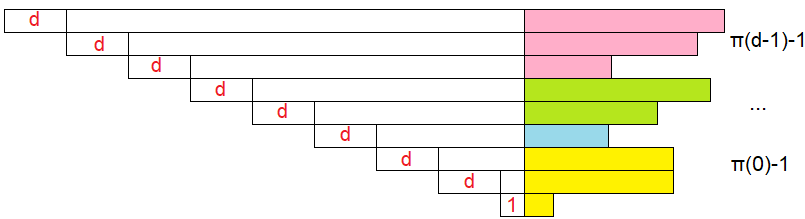}
\end{center}

\noindent \textbf{Step 3} Now shift each row of the diagram back to the right, i.e., add to the parts $\equiv \pi(0)-1$, in increasing order, the numbers $1,\ (d+1),\ (2d+1),\ \dots\ ,\ ((k_0-1)d+1)$; to the parts $\equiv \pi(1)-1$, the numbers $(k_0d+1),\ ((k_0+1)d+1),\ \dots\ ,\ ((k_0+k_1-1)d+1)$; $\dots$; to the parts $\equiv \pi(d-1)-1$, in increasing order, the numbers $((k_0+k_1+\dots+k_{d-2})d+1),\ ((k_0+k_1+\dots+k_{d-2}+1)d+1),\ \dots\ ,\ ((k_0+k_1+\dots+k_{d-2}+k_{d-1}-1)d+1)$, where $k_i$ is the number of parts congruent to $\pi(i)-1$ modulo $d$. Note that after addition, parts congruent to $\pi(i)-1$ modulo $d$ will be congruent to $\pi(i)$ modulo $d$:

\begin{center}
    \includegraphics[width=10cm]{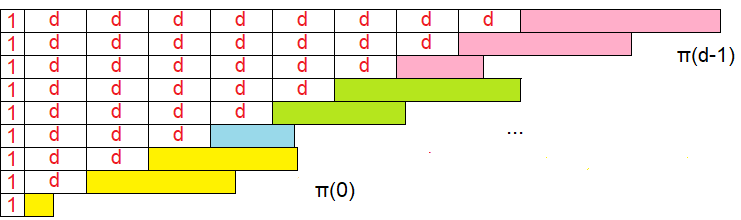}
\end{center}
\noindent \textbf{Step 4} 

Note that after Step 2 on the right side and after Step 3, we do not always obtain Young diagrams, i.e., the parts are not sorted. Therefore, as a final step, we sort the parts:

\begin{center}
    \includegraphics[width=10cm]{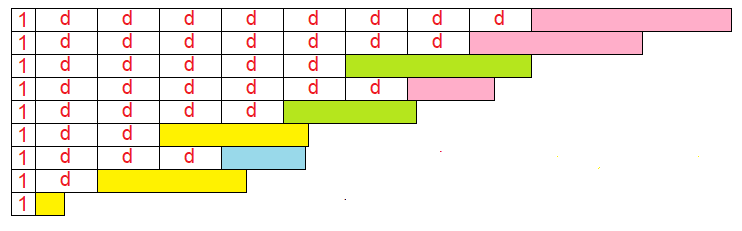}
\end{center}

Thus, in 4 steps we have transformed a $d$-distinct partition of the number $n$ into some partition of the number $n$. Now we will prove that the resulting partition will satisfy the required properties: it will consist of distinct parts and satisfy conditions $c_1,\dots,c_{d-1}$.\\

First, let us understand that the result is always a 1-distinct partition: indeed, in Step 3 we add consecutive numbers congruent to 1 modulo $d$ to monotonically increasing parts, grouped by residues. Therefore, parts that were equal to each other will become unequal, and if they differed, they will differ even more (if they were congruent to each other modulo $d$).\\

It remains to prove properties $c_1,\dots,c_{d-1}$. For this, assume that a distinct partition $\lambda$ was obtained from some $d$-distinct partition using the algorithm described above. We will construct the inverse algorithm and determine for which partitions it can be executed.\\

First, we reverse Step 4. After performing Step 3 of the initial algorithm, at the top we have a group of parts $\equiv \pi(d-1)$ sorted in non-increasing order, followed by parts $\equiv\pi(d-2)$, $\dots$, and at the very bottom, a group of parts $\equiv\pi(0)$ sorted in non-increasing order. Therefore, to reverse Step 4, we need to rearrange the parts of the partition $\pi$ into these groups, keeping them sorted within each group.

\begin{center}
    \includegraphics[width=10cm]{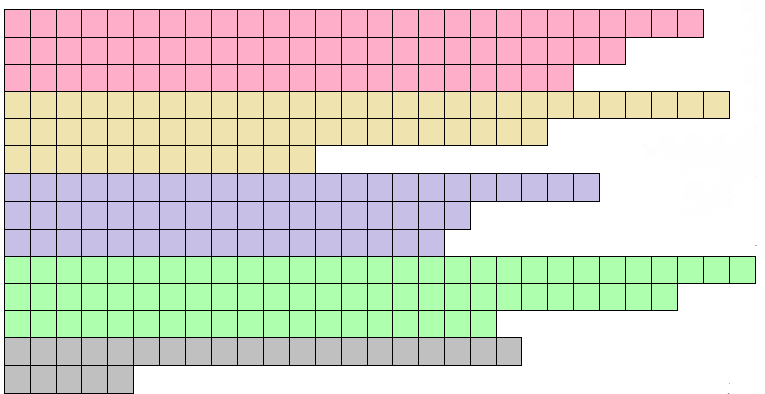}
\end{center}

Let $\lambda_0$ denote the number of parts $\equiv\pi(0)$, $\lambda_1$ the number of parts $\equiv\pi(1)$, $\dots$, and $\lambda_{d-1}$ the number of parts $\equiv\pi(d-1)$. Now reverse Step 3: subtract $1$ from the smallest part in the bottom group of $\lambda_0$ parts, $d+1$ from the second, $\dots$, $d \cdot(\lambda_0-1)+1$ from the largest part in this group; then subtract $d\cdot\lambda_0+1$ from the smallest part in the second group, $d\cdot(\lambda_0+1)+1$ from the second, $\dots$, $d\cdot(\lambda_0+\lambda_1-1)+1$ from the largest part in this group; $\dots$; finally, subtract $d\cdot(\lambda_0+\dots+\lambda_{d-2})+1$ from the smallest part in the top group, $d\cdot(\lambda_0+\dots+\lambda_{d-2}+1)+1$ from the second, $\dots$, $d\cdot(\lambda_0+\dots+\lambda_{d-2}+\lambda_{d-1}-1)+1$ from the largest part in the top group.

Note that since $\lambda$ is a 1-distinct partition, any two parts with the same residue differ by at least $d$, and they are sorted in non-increasing order. Therefore, if it is possible to subtract $1$ from the smallest part congruent to $\pi(0)$ (which can always be done), then the successive numbers of the form $d\cdot l+1$ can be subtracted from the remaining, larger parts of the bottom group. If it is possible to subtract $d\cdot\lambda_0+1$ from the smallest part of the second group, then the subsequent successive numbers can be subtracted from the remaining, larger parts of that group, and so on. Thus, the only possible issues arise if it is impossible to subtract $d\cdot(\lambda_0+\dots+\lambda_{s-1})+1$ from the smallest part of the $(s+1)$-th group from the bottom (i.e., the group where parts are congruent to $\pi(s)$ modulo $d$), for $1 \leq s \leq d-1$. This can be done exactly when this smallest part congruent to $\pi(s)$, and consequently all others, is at least $d\cdot(\lambda_0+\dots+\lambda_{s-1})+1$, i.e., strictly greater than $d\cdot\left(\textit{the total number of parts $\equiv \pi (0),\ \pi(1),\ \dots \textit{ or } \pi(s-1) \pmod{d}$}\right)$.

Next, if the required inequality holds, we shift the parts to the left by the consecutive numbers $1,\ (d+1),\ \dots\ ,\ d\cdot(\lambda_0+\dots+\lambda_{d-1}-1)+1$, leaving unsorted parts on the right, and then reverse Steps 2 and 1 by sorting the parts on the right and shifting them back. It remains to explain why the result after inversion is a $d$-distinct partition. This follows from the fact that, when reversing Step 1 after reversing Step 2, we add strictly increasing numbers, all congruent modulo $d$, to monotonically increasing right-hand parts.\\

Thus, we have constructed a direct algorithm that transforms a $d$-distinct partition into some other partition, and then we reversed the algorithm, showing that, firstly, it can be reversed only for partitions with the required properties, and secondly, the reversed algorithm yields a $d$-distinct partition. Therefore, this is a bijection between these two sets, which proves the equality:

\begin{align*}
p(n|\text{$d$-distinct})
=p(n|\text{1-distinct and conditions $c_1,\dots,c_{d-1}$ hold}).
\end{align*}

\section{Proof of Theorem \ref{theorem:BridgeResult}}\label{section:BridgeResult}

Theorem \ref{theorem:BridgeResult} is a generalization of Theorem \ref{theorem:Problem2Gen}, so we will prove it.\\

The proof of this theorem differs from that of the main theorem in that the first step involves subtracting the staircase $k, k+d, k+2d, \dots$ from the partition --- rather than the staircase $1, d+1, 2d+1, \dots$ --- which is possible due to the $d$-distinctness property and the constraint on the size of the smallest part. Next, the right-hand side of the diagram is again partitioned based on residues; this time into groups congruent to $\pi(0)-k,\ \pi(1)-k, \dots,\ \pi(d-1)-k$ from top to bottom, since the staircase parts are congruent to $k$ modulo $d$ rather than $1$ (and within each group, in non-increasing order). It can be done because $\pi(0)-k$ and so on $\pi(d-1)-k$ is again a complete system of residues. Finally, the staircase is added back, and the resulting parts are sorted, once again satisfying the conditions of $1$-distinctness. Because the smallest step of the staircase is $k$, the smallest part of the resulting partition will be $\geq k$.\\

Now let us see how the conditions $c_1, \dots, c_{d-1}$ have changed. To do this, we attempt to reverse the algorithm for an arbitrary $1$-distinct partition with smallest part $\geq k$. We group the parts according to their residues: $\pi(0)$ at the top, then $\pi(1)$, ..., and $\pi(d-1)$ at the bottom. Now, we need to subtract the staircase $k, k+d, k+2d, \dots$. Given the grouping by residues and the fact that the parts are $1$-distinct, it suffices to subtract the appropriate step of the staircase from the smallest part in each group. Thus, if there are exactly $m$ parts under with the smallest part in group $\pi(i)$, we want to subtract the step $k+md$ from it; this gives rise to the condition
$$c^k_{i}:\ \text{any part $\equiv \pi(i)$ (mod $d$) is greater than or equal to}$$
$$\text{$k+d * \left (\text{the total number of parts }\equiv \pi (0),\ \pi (1),\  \dots\ \text{ or } \pi(i-1) \pmod{d} \right ).$}$$

 After this operation, the remaining parts are sorted, and the staircase is added back, completing the reverse algorithm.

\section{Acknowledgments}

I would like to thank Mr. Mortenson, Mr. Andrews, and Mr. Eriksson for their inspiration in this work.

\end{document}